\documentclass[7 pt]{article}   
\usepackage{t1enc}
\usepackage{lmodern}
\usepackage[T1]{fontenc}
\usepackage[english]{babel}
\usepackage{graphicx}
\usepackage{tocloft}
\usepackage{yhmath}
\usepackage{amsmath}
\usepackage{textcomp}
\usepackage{ytableau}
\usepackage{mathtools}
\usepackage{tensor}
\usepackage{relsize}

\usepackage{amsmath,amsthm, amsfonts, amssymb, amscd, enumerate,mathrsfs, graphicx, graphpap, curves, color}
\tolerance=10000 \sloppy
\usepackage[all]{xy}
\usepackage{amsmath}

\theoremstyle{plain}
\newtheorem{theo}{Theorem}[section]

\newtheorem{prop}[theo]{Proposition}
\newtheorem*{prop*}{Proposition}
\newtheorem{conjecture}[theo]{Conjecture}

\theoremstyle{definition}

\newtheorem{remark}[theo]{Remark}
\newtheorem{definition}{Definition}


\newcommand{\s}{\sigma}


\newcommand{\bK}{\mathbb{K}}

\newcommand{\bN}{\mathbb{N}}

\newcommand{\bP}{\mathbb{P}}


\newcommand{\gl}{\mathfrak{l}}

\newcommand{\gs}{\mathfrak{s}}




\renewcommand{\square}{\kern1pt\vbox
{\hrule height 0.6pt\hbox{\vrule width 0.6pt\hskip 3pt \vbox{\vskip
6pt}\hskip 3pt\vrule width 0.6pt}\hrule height0.6pt}\kern1pt}

\newcommand{\beq}{\begin{equation}}
\newcommand{\eeq}{\end{equation}}

\newcommand{\ol}{\overline}

\newcommand{\be}{\begin{equation}}
\newcommand{\ee}{\end{equation}}

\def\<#1,#2>{\langle\,#1,\,#2\,\rangle}
\newcommand{\arr}{\begin{array}{rlll}}
\newcommand{\ea}{\end{array}}
\newcommand{\bea}{\begin{eqnarray}}
\newcommand{\eea}{\end{eqnarray}}
\newcommand{\bean}{\begin{eqnarray*}}
\newcommand{\eean}{\end{eqnarray*}}

\newcommand{\Math}{{\it Mathematica\raise5 pt\hbox{$\scriptscriptstyle \lrcornerledR$}7}}

\newtheorem*{lemma*}{Lemma}
\newcommand{\superwedge}{\mathlarger{\mathlarger{\wedge}}}

\def\sideremark#1{\ifvmode\leavevmode\fi\vadjust{
\vbox to0pt{\hbox to 0pt{\hskip\hsize\hskip1em
\vbox{\hsize3cm\tiny\raggedright\pretolerance10000
\noindent #1\hfill}\hss}\vbox to8pt{\vfil}\vss}}}

\newcounter{ssig}
\setcounter{ssig}{0}

\newcounter{ttig}
\setcounter{ttig}{0}

\title{A note on the maximal rank}
\author{Alessandra Bernardi and Reynaldo Staffolani}
\date{}

\begin{document}
\maketitle
\begin{abstract}
    We give an upper-bound for the $X$-rank of points with respect to a non-degenerate irreducible variety $X$ in the case that sub-generic $X$-rank points generate a hypersurface.
\end{abstract}

\section*{Introduction}

All along the paper we will always work with an algebraically closed  field $\bK$  of characteristic $0$ and a projective variety $X\subset \mathbb{P}^N$ which will be always assumed to be irreducible and non-degenerate. 
For a given point $P\in\bP^N$ there is the well defined notion of $X$-{\it rank} $r_X(P)$ of $P$ which is the least number of points of $X$ whose span contains $P$. Such a notion, before becoming part of the algebraic geometric language as $X$-rank (referring to the underlined variety $X$ cf. \cite{BB}), was previously used in the context of tensors (i.e. when $X$ parametrizes particular type of tensors) and better known in the applied world as {\it structured rank} putting the accent on the particular structure of the tensors (cf. e.g. \cite{ACCF}).
From the applied point of view  the knowledge of the maximal possible $X$-rank that an element in $\langle X\rangle$ may have (e.g. \cite{AJRS, FL1, FL2, EWZ}) turns out to be extremely important. This raises a very interesting pure mathematical problem: being able to give a sharp upper bound on the maximal $X$-rank $r_{max}$.
One natural bound over an algebraically closed field of characteristic $0$ is given by the codimension,  i.e. $r_{max} \leq \operatorname{codim} X + 1$  (c.f. e.g \cite[Prop. 5.1, p. 348]{LT}). The next important result valid for any irreducible non-degenerate variety $X$ is presented in \cite{BT}. Let $g$ be the so called {\it generic $X$-rank} i.e. the first integer such that the Zariski closure of the set of points of rank smaller or equal than $g$ fills $\langle X \rangle$ (this is again a very much studied value, nowadays there are also numerical algorithms to compute it in certain cases cf. eg. \cite{HOOS,BDHM}). Then  \cite[Theorem 1, p. 1022]{BT} shows that $r_{max} \leq 2g$. In the  case in which $X$ is not a hypersurface but the points of subgeneric rank generate a hypersurface, \cite[Theorem 6, p. 1024]{BT} proved that $r_{max} \leq 2g - 1$. 

In \cite[Theorems 3.7 and 3.9, p. 118]{BHMT} this last bound for the hypersurfaces cases is improved to $r_{max}\leq 2g-2$ in the special setting of $X$ being either a curve or a homogeneous variety.

There are some other bounds worth noting in the case of maximum {\it symmetric} rank, i.e. the $X$-rank when $X$ is a Veronese variety. The first two are due to  Jelisiejew  \cite[Corollary 6, p. 331]{Joachim} and to Ballico--De Paris \cite[Remark 4.18, p. 913]{BDP}, which both obtain a bound on the {\it open symmetric rank}, a higher notion of rank which is always greater or equal than the symmetric one (we will recall them in \eqref{joachim} and in \eqref{ballico:deparis} respectively). Eventually there is also a bound given by  \cite[Prop. 3.3, p. 28]{DeP2} (see \eqref{DeParis:2}) for Veronese surfaces. An asymptotic bound is also presented.

\bigskip 

In this note, we focus on the case of $X$ being a variety such that the Zariski closure of the points of subgeneric rank $\sigma_{g-1}(X)$ is a hypersurface. In Theorem \ref{main} we will show that the bound for $r_{max}$ can be reduced to $$r_{max}\leq r_{max,g-1} + 1,$$ where $r_{max,g-1}$ is the maximum $X$-rank attained on the variety $\s_{g-1}(X)$.

We end the paper by comparing our bound with the existing ones highlighting the cases where our bound give more accurate estimates than the known ones.

\section{Notation and main result} 

\begin{definition}  Let $X\subset\bP^N$ be an irreducible non-degenerate projective variety. The Zariski closure of the set $\sigma_{s}^0(X)$ of points of $\bP^N$ of $X$-rank at most $s$ is an irreducible projective variety called $s$-{\it th secant variety of $X$} and denoted $\s_s(X)$.
\end{definition}

Secant varieties are nested and  there exists an integer $g$ such that $\s_g(X)$ fills the ambient space:

$$ X \subset \s_2(X) \subset \dots \subset \s_i(X) \subset \dots \subset \sigma_g(X)= \bP^N. $$

\begin{definition} Let $X\subset\bP^N$ be an irreducible non-degenerate projective variety.
The least integer $g$ such that $\sigma_g(X)=\bP^N$ is the {\it generic} $X$-rank.
\end{definition}

The generic $X$-rank may not coincide with the maximum $X$-rank appearing in $\bP^N$. There are cases in which the generic is the same as the maximum $X$-rank and cases in which there exist points with $X$-rank greater than the generic one, for example points on tangent lines of a rational normal curve of degree $d>2$ (cf. \cite{Syl}). 
Hence one may seek for a bound for the maximum rank and one may try to see if it is attained or not.

\begin{prop}\label{reducible:intersection}Let $X\subset\bP^N$ be an irreducible non-degenerate projective variety of dimension $n$ and let $W$ be a hypersurface strictly containing $X$. Let $P \in \bP^N \setminus W$ be such that $r_X(P) \neq 2$, and let
$$Y_P : = \operatorname{cone}(P,X) = \bigcup_{x \in X} \langle x,P \rangle .$$
Then $Y_P\cap W$ is reducible of dimension $n$. 
Moreover there exists a line in $Y_P$ through $P$ that meets $X$ in one point only and  $W\setminus X$ in at least one other point.
\end{prop}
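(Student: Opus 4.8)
The plan is to analyze $Y_P = \operatorname{cone}(P,X)$ together with the hypersurface $W$ by a dimension count and then to extract the ``extra'' point on $W\setminus X$ by a degree argument on a generic line through $P$. First observe that $Y_P$ is an irreducible variety of dimension $n+1$: it is the image of the incidence variety $\{(x,Q): x\in X,\ Q\in\langle x,P\rangle\}$, which fibers over $X$ with one-dimensional fibers, and the cone is not a cone over a point since $r_X(P)\neq 1$ (indeed $P\notin W\supset X$). Because $W$ is a hypersurface and $Y_P\not\subset W$ (again $P\in Y_P\setminus W$), the intersection $Y_P\cap W$ is a proper closed subvariety of $Y_P$, hence has dimension at most $n$; on the other hand $X\subset Y_P\cap W$ already has dimension $n$, so $\dim(Y_P\cap W)=n$ and $X$ is one of its irreducible components.

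The heart of the matter is to show $Y_P\cap W$ has a component other than $X$, i.e. that it is reducible. Here is where the hypotheses $r_X(P)\neq 2$ and $W\supsetneq X$ properly are used. Consider the lines $\ell_x = \langle x,P\rangle$ for $x\in X$; these sweep out $Y_P$. Restricting the defining equation of $W$ (of degree $d=\deg W$) to such a line $\ell_x$ gives, for generic $x$, a degree-$d$ form on $\ell_x\cong\bP^1$ that does not vanish at $P$ and that vanishes at $x$. If for a generic $x$ the only zero of this form on $\ell_x$ were $x$ itself (with multiplicity $d$), then every generic line $\ell_x$ would meet $W$ only at $x$; I would argue this forces, via a monodromy/irreducibility argument on the family of lines, that $W\cap Y_P$ is exactly $X$ set-theoretically and that the $d$-to-one behavior is uniform — and then derive a contradiction. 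The contradiction should come from the structure of $\langle X\rangle$: pick two general points $x_1,x_2\in X$; the plane $\Pi=\langle x_1,x_2,P\rangle$ meets $W$ in a plane curve $C$ of degree $d$ containing the line $\langle x_1,x_2\rangle$ (since $W\supset X\ni x_1,x_2$ implies $\langle x_1,x_2\rangle\subset\langle X\rangle$, but one needs $\langle x_1,x_2\rangle\subset W$, which requires care). The point $P\notin C$, and the two lines $\ell_{x_1},\ell_{x_2}\subset\Pi$ pass through $P$; if each meets $C$ only at $x_i$, then $C - \langle x_1,x_2\rangle$ is a curve of degree $d-1$ meeting $\ell_{x_1}$ and $\ell_{x_2}$ only at points of $\langle x_1,x_2\rangle$, and running $x_2$ over $X$ while fixing $x_1$ should collapse $W$ onto a cone with vertex on $X$, contradicting that $W$ properly contains $X$ together with $r_X(P)\neq 2$ (the latter preventing $P$ from lying on a secant line, which is what would let $C$ degenerate). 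This reducibility step is the main obstacle and the part that needs the most care.

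Once reducibility of $Y_P\cap W$ is established, let $Z$ be an irreducible component of $Y_P\cap W$ different from $X$, so $\dim Z = n$ as well (every component of $Y_P\cap W$ has dimension $\geq \dim Y_P - 1 = n$, and $\leq n$ as shown). Take a general point $z\in Z$. The line $\ell = \langle \pi(z),P\rangle$ through $P$ in $Y_P$, where $\pi(z)\in X$ is such that $z\in\langle \pi(z),P\rangle$ — equivalently just take $\ell$ to be the ruling line of $Y_P$ through $z$ — meets $X$ in the single point $\pi(z)$ for general $z$ (a general ruling line of a cone meets the base exactly once, since two intersection points would put $P$ on a secant line, contradicting $r_X(P)\neq 2$). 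This same line contains $z\in Z\subset W$ with $z\notin X$ (for general $z\in Z$, since $Z\neq X$). Hence $\ell\subset Y_P$ passes through $P$, meets $X$ only at $\pi(z)$, and meets $W\setminus X$ at $z$, which is exactly the ``moreover'' assertion.

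A remark on what must still be checked carefully in the writeup: (i) that $\langle x_1,x_2\rangle\subset W$ for general $x_1,x_2\in X$, or an alternative route to reducibility avoiding this; (ii) the precise form of the monodromy argument ruling out the ``$d$-fold point at $x$'' scenario; and (iii) that the general ruling line of $Y_P$ meets $X$ in exactly one point, which is where $r_X(P)\neq 2$ enters decisively. Steps (i) and (ii) together constitute the real content; the dimension bookkeeping and the final line-extraction are routine.
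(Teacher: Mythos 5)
There is a genuine gap, and it sits exactly where you yourself flag it: the reducibility of $Y_P\cap W$, which is the whole content of the proposition. Your dimension count and the final extraction of the line from a component $Z\neq X$ are fine and match the paper, but the middle step is not a proof. Two specific problems: (a) your route through the plane $\Pi=\langle x_1,x_2,P\rangle$ needs $\langle x_1,x_2\rangle\subset W$, which does not follow from $X\subset W$ — a hypersurface containing $X$ need not contain its secant lines (e.g.\ a quadric surface in $\bP^3$ containing a twisted cubic contains essentially no secant line of the cubic); (b) the ``monodromy/irreducibility argument'' and the ``collapse $W$ onto a cone'' contradiction are never actually formulated, and it is not clear they can be. So the proposal does not close.

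The paper's argument for this step is much more elementary and you were one observation away from it. Suppose every ruling line $\overline{PQ}$, $Q\in X$, meets $W$ only at $Q$; since $r_X(P)\neq 2$ no ruling line meets $X$ twice, so $f(sP+tQ)=k_Q s^d$ where $f$ is the degree-$d$ equation of $W$. Now expand $f(sP+tQ)$ as a form in $s,t$: the coefficient of $s^d$ is $f(P)\neq 0$, the coefficient of $t^d$ is $f(Q)=0$, and the coefficient of $s^{d-1}t$ is a \emph{linear} form in the coordinates of $Q$ (up to scalar it is $\sum_i \partial f/\partial x_i(P)\,q_i$, the polar of $P$ with respect to $W$, which is a nonzero linear form because $\sum_i p_i\,\partial f/\partial x_i(P)=d\,f(P)\neq 0$ by Euler's relation). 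Forcing $f(sP+tQ)=k_Q s^d$ for all $Q$ in a dense subset of $X$ makes this linear form vanish on $X$, contradicting non-degeneracy. This replaces both of your problematic steps (i) and (ii) with a direct computation, after which your concluding paragraph goes through verbatim.
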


\begin{proof}
The cone $Y_P$ is irreducible of dimension $n+1$, then the components of the intersection $Y_P \cap W$ have dimension at least $n+1+N-1-N = n=\dim(X)$ (cf. \cite[Thm.1.24, p. 75]{Shafarevich}).
Actually, since $Y_P$ is not contained in $W$, those components have dimension exactly $n$. Clearly $X\subset Y_P\cap W$.
Assume for the sake of contradiction that $ Y_P\cap W=X$.  If this is the case, then every line contained in $Y_P$ and passing through $P$ meets $W$ 
only on $X$ and moreover such intersection is made by one point only, otherwise the point $P$ would have $X$-rank $2$ which is against our hypothesis. Now we show that the fact that every line $l\subset Y_P$ through $P$ meets $W$ only in one point of $X$, i.e.: 
\begin{equation}\label{one:point}
  l\cap W=Q\in X\end{equation}
leads to a contradiction.

\noindent Since $W$ is a hypersurface, it is cut out by a single homogeneous equation of degree $d>1$, $W=V (f)$. As just shown every line $\ol{PQ} \subset Y_P$, with $Q \in X$, meets $W$ only in $Q$. We can parametrize $\ol{PQ}$ as

$$ \ol{PQ}=\{sP+tQ\ :\ [s:t] \in \bP^1 \} \cong \bP^1 $$

\noindent where in this notation the point $Q$ is represented by the point $[0:1] \in \bP^1$. Substituting the coordinates $sP+tQ$ of $\ol{PQ}$ in the equation $f$ of the hypersurface we get a homogeneous equation of degree $d$ in two variables which must vanish only at the point $Q$, i.e. we have an equation of the form 

\beq \label{EqBella} f(sP+tQ) = k_Qs^d \eeq

\noindent for some constant $k_Q \in \bK$. We show that if this happens for every line $\ol{PQ}$, with $Q \in X$, then we will get a contradiction. Indeed, suppose that $f$ can be written as the polynomial

$$ f(x_0,\dots,x_N)= \sum_{(b_0,\dots,b_N) \in \bN^{N+1},\ b_0 + \dots+b_N=d} a_{(b_0,\dots,b_N)} x_0^{b_0} \dots x_N^{b_N}.$$
The coefficient of the monomial $s^{d-1}t$ after the substitution \eqref{EqBella} has to zero.  Moreover it turns out to be the directional derivative of $f$ at $P$ in the direction of $Q$:

\begin{multline}
     \label{monomio2}
Df_P(Q) =  a_{(b_0,\dots,b_N)} [
b_0 \cdot p_0^{b_0-1} p_1^{b_1} \dots p_N^{b_N} \cdot q_0 + b_1 \cdot p_0^{b_0} p_1^{b_1-1} p_2^{b_2} \dots p_N^{b_N} \cdot q_1 + \dots
\\ 
\dots + b_N \cdot p_0^{b_0} \dots p_{N-1}^{b_{N-1}} p_{N}^{b_N-1} \cdot q_N ].  
\end{multline}

Since for a fixed $P$ the form $Df_P$ is linear in $Q$, then $X$ is contained in the hyperplane $Df_P = 0$, which is non-trivial by the Euler formula and the fact that the evaluation of $f$ at $P$ is different from zero. This is in contradiction with non-degeneracy hypothesis on $X$. Hence there exists a line $l$ inside $Y_P$ containing $P$ that intersects $W$ in at least another point in $W \setminus X$. This concludes the proof.
\end{proof}

\begin{theo} \label{main} Let $X \subset \bP^N$ be a smooth non-degenerate projective variety of dimension $n$ and let $g$ be the generic $X$-rank. If  $\s_{g-1}(X)$ is a hypersurface, $g >2$, then
$$r_{\text{max}} \leq r_{max,g-1}+1 $$ where $r_{max,g-1}$ is the maximum $X$-rank achieved on $\s_{g-1}(X)$.
\end{theo}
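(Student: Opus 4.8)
The plan is to take an arbitrary $P \in \bP^N$ with $r_X(P) = r_{\text{max}}$ and to produce a point $P'$ on the hypersurface $W \= \s_{g-1}(X)$ together with a single point $x \in X$ such that $P \in \langle x, P' \rangle$. Once we have this, subadditivity of $X$-rank along a line gives $r_X(P) \leq 1 + r_X(P') \leq 1 + r_{max,g-1}$, which is exactly the assertion. So everything reduces to finding such a decomposition. If $P \in W$ there is nothing to prove, and if $r_X(P) = 2$ the bound $r_{\text{max}} \leq r_{max,g-1} + 1$ is immediate since $r_{max,g-1} \geq 1$; hence we may assume $P \notin W$ and $r_X(P) \neq 2$, which is precisely the hypothesis of Proposition \ref{reducible:intersection} (with $W$ the hypersurface $\s_{g-1}(X)$, which properly contains $X$ because $X$ is not a hypersurface — indeed if it were, Proposition \ref{X-iper} would give $r_{\text{max}} = 2$ and we would be done).

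First I would invoke Proposition \ref{reducible:intersection}: since $P \in \bP^N \setminus W$ and $r_X(P) \neq 2$, there is a line $\ell \subset Y_P = \operatorname{cone}(P,X)$ through $P$ meeting $X$ in a single point $x$ and meeting $W \setminus X$ in at least one further point, call it $P'$. Thus $P' \in \s_{g-1}(X)$ and $\{x, P', P\}$ all lie on the line $\ell$. Since $x \neq P'$ (one lies on $X$, the other on $W \setminus X$), the line $\ell$ equals $\langle x, P' \rangle$, and $P \in \ell = \langle x, P' \rangle$.

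Now I would run the subadditivity step carefully. Because $P' \in \s_{g-1}(X)$, by definition of $r_{max,g-1}$ we have $r_X(P') \leq r_{max,g-1}$; write $P'$ as a combination of $m \leq r_{max,g-1}$ points of $X$, so $P' \in \langle x_1, \dots, x_m \rangle$. Then $P \in \langle x, P' \rangle \subseteq \langle x, x_1, \dots, x_m \rangle$, which exhibits $P$ as lying in the span of at most $m + 1 \leq r_{max,g-1} + 1$ points of $X$, hence $r_X(P) \leq r_{max,g-1} + 1$. Since $P$ was chosen with maximal rank, $r_{\text{max}} \leq r_{max,g-1} + 1$.

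The only genuine subtlety — and the step I would be most careful about — is the case analysis needed before Proposition \ref{reducible:intersection} applies: one must separately dispose of $P \in W$, of $r_X(P) = 2$, and of the degenerate possibility that $X$ itself is a hypersurface (handled by Proposition \ref{X-iper}); also one should check that $r_{max,g-1} \geq 1$ so that the bound is not vacuous in the $r_X(P) \leq 2$ cases, which holds trivially since $\s_{g-1}(X) \supseteq X \neq \emptyset$. The heavy lifting — that $Y_P \cap W$ must be reducible and therefore contains a line through $P$ hitting $X$ once and $W$ again — is already done in Proposition \ref{reducible:intersection}, so no further geometric input is required; the remaining work is just the bookkeeping of the linear spans.
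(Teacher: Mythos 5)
Your proposal is correct and follows essentially the same route as the paper: apply Proposition \ref{reducible:intersection} with $W=\s_{g-1}(X)$ to get a line through $P$ meeting $X$ in one point and $\s_{g-1}(X)\setminus X$ in another, then conclude by subadditivity of spans, with Proposition \ref{X-iper} disposing of the $g=2$ (i.e.\ $X$ a hypersurface) case. Your explicit handling of the edge cases $P\in W$ and $r_X(P)=2$ is a bit more careful than the paper's write-up, but it is the same argument.
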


\begin{proof}
Let $P \in \bP^N \setminus \s_{g-1}(X)$ and $Y_P : = \operatorname{cone}(P,X) = \bigcup_{x \in X} \langle x,P \rangle $ as in Proposition \ref{reducible:intersection} with $W=\s_{g-1}(X)$, where we have shown that there exists a line of $Y_P$ through $P$ which intersects $\sigma_{g-1}(X)$ in at least two distinct points, say $Q_1, Q_2$ such that $Q_1\in X$ and $Q_2\in \sigma_{g-1}(X) \setminus X$. Therefore $P\in \langle Q_1,Q_2\rangle$. If $Q_2\in \sigma_{g-1}^0(X)$ then $r_X(P)\leq g$, while if the components of $Y_P\cap\sigma_{g-1}(X)$ different from $X$ are all contained in $\sigma_{g-1}(X)\setminus\sigma_{g-1}^0(X) $ we can only say that $r_X(P)\leq r_{max,g-1}+1$.
\end{proof}

\medskip

We would like to point out some interesting consequences of this result.

\begin{remark}
Suppose that $X\subset \mathbb{P}^N$ is a non-degenerate irreducible variety such that the last non filling secant variety $\sigma_{g-1}(X)$ is hypersurface, with $g \neq 2$, and let $P \in \mathbb{P}^N \setminus \sigma_{g-1}(X)$. The intersection $Y_P \cap \s_{g-1}(X)$ cannot be contained in $\s_{g-2}^0(X)$ otherwise 
the point $P \in \bP^N \setminus \s_{g-1}(X)$ must lie in $ \s_{g-1}(X)$ which is impossible.
\end{remark}

\begin{remark}\label{rem.pre.conj}
Suppose that $X\subset \mathbb{P}^N$ is a non-degenerate irreducible variety such that the last non filling secant variety $\sigma_{g-1}(X)$ is a hypersurface, with $g \neq 2$, and let $P \in \mathbb{P}^N \setminus \sigma_{g-1}(X)$. The rank of the point $P \in \bP^N \setminus \s_{g-1}(X)$ is $g$ if and only if the intersection $Y_P \cap \s_{g-1}(X)$ contains at least one point of $\s_{g-1}^0(X)$ but we were not able to distinguish whether there exist points $P$ for which $(Y_P\cap \sigma_{g-1}(X))\subset (\sigma_{g-1}(X)\setminus \sigma_{g-1}^0(X))
$. Of course if $P$ is generic it is obvious that $Y_P\cap \sigma_{g-1}^0(X)\neq \emptyset$, and indeed the generic rank is $g$. 

This leads us to the following conjecture.
\end{remark}

\begin{conjecture} \label{Conjecture}
Let $X \subset \bP^N$ be a smooth non-degenerate projective variety and let $g>2$ be the generic $X$-rank. If  $\s_{g-1}(X)$ is a hypersurface, then
$$r_{\text{max}} = \max\{ r_{max,g-1},g \}$$ where $r_{max,g-1}$ is the maximum $X$-rank achieved on $\s_{g-1}(X)$.
\end{conjecture}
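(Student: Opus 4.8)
\emph{The plan.} I would prove the two inequalities separately. The lower bound $r_{max}\geq \max\{r_{max,g-1},g\}$ is immediate: the maximum over all of $\bP^N$ dominates the maximum over the subvariety $\sigma_{g-1}(X)$, so $r_{max}\geq r_{max,g-1}$, while $\sigma_{g-1}(X)\subsetneq \bP^N=\sigma_g(X)$ forces every point of the nonempty open set $\bP^N\setminus\sigma_{g-1}(X)$ to have rank $\geq g$ (a general such point having rank exactly $g$), so $r_{max}\geq g$. The content is therefore the upper bound $r_{max}\leq\max\{r_{max,g-1},g\}$, which for $P\in\sigma_{g-1}(X)$ is the definition of $r_{max,g-1}$. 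So I would fix $P\in\bP^N\setminus\sigma_{g-1}(X)$ and, following the proof of Theorem \ref{main}, set $Y_P=\operatorname{cone}(P,X)$ and let $R_P$ be the union of the components of $Y_P\cap\sigma_{g-1}(X)$ other than $X$; by Proposition \ref{reducible:intersection} each such component has dimension $n$, and every $Q_2\in R_P$ lies on a line $\langle Q_1,Q_2\rangle\ni P$ with $Q_1\in X$, whence $r_X(P)\leq 1+r_X(Q_2)$.

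First I would dispose of the case in which $\sigma_{g-1}(X)=\sigma_{g-1}^0(X)$ has no boundary (equivalently $r_{max,g-1}=g-1$): then any $Q_2\in R_P$ satisfies $r_X(Q_2)\leq g-1$, so $r_X(P)\leq g$, and with the lower bound $r_{max}=g=\max\{r_{max,g-1},g\}$. This settles the conjecture whenever the secant hypersurface is ``honest''; in particular, by the computation in Proposition \ref{X-iper}, it covers the case $\operatorname{codim}X=1$, where $g=2$ and $r_{max,g-1}=1$.

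The remaining, genuinely hard, case is $\rho:=r_{max,g-1}\geq g$, where the conjecture asserts $r_{max}=\rho$. Since Theorem \ref{main} gives $r_{max}\leq \rho+1$, I would argue by contradiction, assuming some $P\in\bP^N\setminus\sigma_{g-1}(X)$ has $r_X(P)=\rho+1$. The key algebraic observation is that this forces \emph{every} residual point to have maximal rank: if some $Q_2\in R_P$ had $r_X(Q_2)\leq\rho-1$, writing $Q_2\in\langle y_1,\dots,y_{\rho-1}\rangle$ with $y_i\in X$ would give $P\in\langle Q_1,y_1,\dots,y_{\rho-1}\rangle$ and hence $r_X(P)\leq\rho$, a contradiction. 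Thus $r_X(P)=\rho+1$ is equivalent to the containment $R_P\subseteq S_\rho$, where $S_\rho=\{Q\in\sigma_{g-1}(X):r_X(Q)=\rho\}$ is the top-rank locus. Since a general point of $\sigma_{g-1}(X)$ has rank $g-1<\rho$, the locus $S_\rho$ sits in a proper closed subset, so $\dim S_\rho\leq N-2$. The whole problem is thereby reduced to proving that the $n$-dimensional residual $R_P$ is never swallowed by the $(\leq N-2)$-dimensional locus $S_\rho$, i.e. that $R_P$ always meets $\{Q:r_X(Q)\leq\rho-1\}$.

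The crux is thus a statement about strata, and the naive comparison already settles part of it: if $n\geq N-1$ then $\dim R_P=n>N-2\geq\dim S_\rho$, so the containment $R_P\subseteq S_\rho$ is impossible and $r_{max}=\rho$ follows (again the hypersurface regime). The difficulty is the complementary range $\operatorname{codim}X\geq 2$, where an $n$-dimensional residual can a priori be contained in the $(N-2)$-dimensional top-rank locus; this is the main obstacle, and it is exactly the indeterminacy flagged in the Remark preceding the conjecture. To break it I would pass to the incidence variety $\mathcal{I}=\{(P,Q):Q\in\sigma_{g-1}(X),\ \langle P,Q\rangle\cap X\neq\emptyset\}$, whose fibre over $P$ is $R_P$ and whose fibre over $Q$ is $\operatorname{cone}(Q,X)$ of dimension $n+1$, and study the image in $\bP^N$ of the preimage of $S_\rho$ under the projection to $\sigma_{g-1}(X)$; since the plain dimension count on $\mathcal{I}$ is inconclusive, the argument must use that $\sigma_{g-1}(X)$ is a \emph{hypersurface}. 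Via Terracini's lemma the tangent hyperplane to $\sigma_{g-1}(X)$ at a general point of $\sigma_{g-1}^0(X)$ is spanned by the tangent spaces at a decomposition, and I would try to show that as $x$ ranges over $X$ the second intersection $Q_2(x)$ of the line $\langle P,x\rangle$ with the degree-$d$ form cutting out $\sigma_{g-1}(X)$ cannot sweep out a family all of exact rank $\rho$, being forced to meet a lower stratum $\sigma_{g-2}(X)$ and hence drop rank. An alternative route would be a monodromy/irreducibility argument on the family $\{R_P\}$, showing that since the generic member meets $\sigma_{g-1}^0(X)$ (the generic rank being $g$) no special member can lie entirely in the thin locus $S_\rho$. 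Establishing that the residual always escapes the top-rank stratum is where I expect the real difficulty to lie.
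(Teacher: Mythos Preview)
The statement you are attempting to prove is labelled \emph{Conjecture} in the paper, and the paper offers no proof of it. The authors explicitly say, in the remark immediately preceding the conjecture, that they ``were not able to discriminate whether there exist points $P$ for which $(Y_P\cap \sigma_{g-1}(X))\subset (\sigma_{g-1}(X)\setminus \sigma_{g-1}^0(X))$''. So there is no paper proof to compare against; the relevant question is whether your proposal actually closes the gap the authors could not.

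It does not. Your lower bound and the ``honest'' case $r_{max,g-1}=g-1$ are fine and elementary. Your reduction of the hard case to the containment $R_P\subseteq S_\rho$ is exactly the obstruction the paper isolates; you have restated the problem rather than solved it. The dimension observation for $n\geq N-1$ is redundant: if $X$ is a hypersurface then $g=2$ by Proposition~\ref{X-iper}, so $r_{max,g-1}=1<g$ and you are back in the already-settled honest case.

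The two speculative routes you sketch are not proofs. The monodromy/specialisation idea is in fact wrong as stated: that the \emph{generic} residual $R_P$ meets the open set $\sigma_{g-1}^0(X)$ in no way prevents \emph{special} residuals from lying entirely in the closed complement; openness of a condition on the base does not propagate to all fibres. The Terracini route (``forced to meet a lower stratum $\sigma_{g-2}(X)$ and hence drop rank'') is too vague to evaluate, and note that meeting $\sigma_{g-2}(X)$ does not by itself lower the $X$-rank below $\rho$, since high-rank points can and do occur inside small secant varieties. In short, your proposal correctly locates the difficulty but does not resolve it; the statement remains, as in the paper, a conjecture.
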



\section{Comparison} \label{esempi} \medskip
In this section we compare our bound with the existing ones on some known examples. \smallskip

The most studied case is the one of symmetric tensors where  the rank is  with respect to a Veronese variety. Let $X_{n,d}$ be the Veronese embedding of $\mathbb{P}^n$ via $\mathcal{O}(d)$.
%
%
To the best of our knowledge, some of the best upper bounds for the Waring rank are due to J. Jelisiejew  \cite[Corollary 6, p. 331]{Joachim} and to E. Ballico-A. De Paris \cite[Remark 4.18, p. 913]{BDP}. In both these works, the bounds are given on a different notion of rank known as {\it open Waring rank}, we refer to  \cite[Definition 2, p. 330]{Joachim} for a definition. Since the open Waring rank is always greater or equal then the usual Waring rank, from \cite{Joachim} and \cite{BDP} one gets these bounds on the maximum symmetric rank 
\begin{equation}\label{joachim}
    r_{max}\leq \binom{n+d-1}{n}- \binom{n+d-5}{n-2}\end{equation}
for $n \geq 2,d \geq 3$ , and
\begin{equation}\label{ballico:deparis}
    r_{max}\leq \binom{n+d-1}{n}-\binom{n+d-5}{n-2}-\binom{n+d-6}{n-2}
\end{equation}
for $n \geq2$, $d\geq 4$, respectively. Another known bound is given by  \cite[Prop. 3.3, p. 28]{DeP2} for all homogeneous polynomials of degree $d$ in $3$ variables. In this case \cite{DeP2} shows that

\beq \label{DeParis:2} r_{max} \leq \left \lfloor \frac{d^2+6d+1}{4} \right \rfloor. \eeq
Eventually, consider any Veronese variety $X_{n,d}$. As pointed out in \cite{BT}, it is worth noting that the bound which they give $r_{max} \leq 2g$ in the general case is asymptotically better than \eqref{joachim} and \eqref{ballico:deparis}, even though these last ones are better for small cases. 
%
%

\begin{table}[h!]
    \centering
    \begin{tabular}{|c|c|c|c|c|c|c|c|}\hline
         &$2g - 1$ in \cite{BT}  &$2g-2$ \cite{BHMT}& \cite{DeP2} &\cite{BDP}&\cite{Joachim}&our bound& $\mathbf{r_{max}}$\\ \hline
    $X_{2,2}$      &5&4&4&&&3&$\mathbf{3}$ \\ \hline
    $X_{2,3}$ &7&6&7&&9&6&$\mathbf{5}$\\ \hline
    $X_{2,4}$&11&10&10&17&18&8&$\mathbf{7} $\\ \hline
    \end{tabular}
    \caption{Comparison of bounds on the maximum Waring rank when a secant variety of a Veronese variety is a hypersurface and where the maximum rank is known. See \cite[Theorem 40, p. 48, and Theorem 44, p. 50]{BGI} and \cite{DeP}.}
    \label{tab:my_label}
\end{table}

\smallskip

For the skew-symmetric tensors case one has to study the rank with respect to Grassmann varieties $ Gr(\bP^k,\bP^n)\subset \bP(\superwedge^{k+1} V)$, $2k \leq n-1$. We checked among $1 \leq n,r \leq 500$ and, if the conjecture on defectiveness of secant varieties of Grassmannians holds (cf. \cite{BDdG, CGGgrass}, see also \cite{AOP, McG, Bor, BV}), we found only three cases in which there exists an $r$ such that $\sigma_r(X)$ is a hypersurface: $Gr(\bP^1,\bP^3)$ for $r=1$ (but in such a case our theorem does not apply), $Gr(\mathbb{P}^2, \mathbb{P}^6)$ for $r=3$, $Gr(\bP^7,\bP^{16})$ 
for $r= 333$. 

The case of $Gr(\bP^1,\bP^3)$ is trivial since elements of $\bigwedge^2\mathbb{C}^4$ are skew-symmetric matrices.
The second example is a defective case, the well known $\sigma_3(Gr(\mathbb{P}^2, \mathbb{P}^6))$ (cf. \cite{Sch} and \cite[Section 5]{AOP}). The maximum skew-symmetric rank of a point belonging to $\sigma_3(Gr(\mathbb{P}^2, \mathbb{P}^6))$ is 3, so our main theorem shows that the maximum skew-symmetric rank of a point in $\mathbb{P}(\bigwedge^3\mathbb{C}^7)$ is the generic one, i.e. 4. Indeed in \cite{ABMM} it is shown that the maximum rank is actually 4. This is an example that shows the sharpness of our result (remark that \cite[Theorem 6, p. 3]{BT} in this case provides a bound of 7, while \cite[Theorem 3.9, p. 118]{BHMT} gives $6$).
%
%
%

\smallskip

For the case of Segre variety we highlight only the example of $X$ being the Segre of 3 copies of $\mathbb{P}^2$ where the maximum rank is known to be 5 (c.f. \cite[Thm.5.1, p. 412]{BrHu}, \cite[Thm.4, p.815]{MSS}) and our Theorem \ref{main} gives a bound of 6; this is not sharp but it is better than  \cite[Theorem 6, p. 3]{BT} which gives $r_{max}\leq 9$. The reason why we highlight this example is that in this case our Conjecture \ref{Conjecture} holds and by Remark  \ref{rem.pre.conj} the intersection $Y_P \cap \s_{4}(X)$ contains at least one point of $\s_{4}^0(X)$. 

\smallskip

For the case of Flag varieties we underline only the case of the adjoint varieties of the Lie Algebra $\gs \gl_{n+1}$. In this case the Flag variety contained in $\bP(\gs \gl_{n+1}) \simeq \bP^{(n+1)^2-2}$ is the variety $F(\bP^0,\bP^{n-1};\bP^n)$ whose points are flags $\bP^0 \subset \bP^{n-1}$ in $\bP^n$. It is a known fact that for any $n \geq 1$ this variety parametrizes the $(n+1) \times (n+1)$-traceless matrices of rank $1$. As showed by \cite[Theorem 1.1]{BD}, the $k$-th secant variety of this Flag variety is given by traceless matrices of rank at most $k$, for any $k \leq n+1$. See also \cite{PT} for a description of this in terms of lower semi-continous rank function. Moreover by \cite[Corollary 1.2]{BD}, it is easy to see that $\s_{n+1}(F(\bP^0,\bP^{n-1};\bP^n))$ fills the ambient space, and that the dimension of the $k$-th secant variety of $X$ is $2k(n+1) - k^2-2$. In particular for $k = n$ it is readily seen that $\s_{n}(F(\bP^0,\bP^{n-1};\bP^n))$ is a hypersurface in $\bP(\gs \gl_{n+1})$. By what we have said we get that $r_{max,n} = n$. By the Theorem \ref{main} we get that $r_{max} \leq n+1$. Applying the bound in \cite[Theorem 6, p. 3]{BT} one gets $r_{max} \leq 2(n+1)-1 = 2n + 1$, while by \cite[Theorem 3.9, p. 118]{BHMT} one gets $r_{max} \leq 2n$. Note that also in this case the Conjecture \ref{Conjecture} holds.

\section*{Acknowledgements} We thank E. Ballico, J. Buczy\'nski and the anonimous referees for useful comments. Both authors are partially supported by GNSAGA of INDAM.

\end{document}